\theoremstyle{cited}
\newtheorem{teor}{Theorem}[section]
\newtheorem{lem}[teor]{Lemma}
\newtheorem{cor}[teor]{Corollary}
\theoremstyle{definition}
\newtheorem{deft}[teor]{Definition}
\theoremstyle{remark}
\newtheorem{oss}[teor]{Remark}
\newcommand{\bbR}{\mathbb{R}}
\newcommand{\bbN}{\mathbb{N}}
\newcommand{\bound}{\partial_\infty}
\newcommand{\hyp}{\mathbb{H}}
\newcommand{\bir}{\textup{cr}}
\newcommand{\cat}{\textup{CAT}}
\newcommand{\isom}{\textup{Isom}}
\newcommand{\veps}{\varepsilon}
\title[Asymptotically moebius maps]{Asymptotically Moebius maps and rigidity for the hyperbolic plane}
\author{Alessio Savini}
\begin{document}

\maketitle

\begin{abstract}
Let $S$ be a rank-one symmetric space of non-compact type and let $X$ be a $\cat (-1)$ space. A well-known result by Bourdon states that if a topological embedding $\varphi: \bound S \rightarrow \bound X$ respects cross ratios, that means $$\bir_S( \xi_0,\eta_0,\xi_1,\eta_1)=\bir_X( \varphi(\xi_0),\varphi(\eta_0),\varphi(\xi_1),\varphi(\eta_1))$$ for every $\xi_0,\eta_0,\xi_1,\eta_1 \in \bound S$, then $\varphi$ is induced by an isometric embedding of $S$ into $X$. 

We generalize this result when $S=\hyp^2$ is the real hyperbolic plane as it follows. Let $\varphi_k: \bound \hyp^2 \rightarrow \bound X$ be a sequence of continuous maps which are asymptotically Moebius, that means $$\lim_{k \to \infty} \bir_X(\varphi_k(\xi_0),\varphi_k(\eta_0),\varphi_k(\xi_1),\varphi_k(\eta_1))=\bir_{\hyp^2}( \xi_0,\eta_0,\xi_1,\eta_1)$$ for every $\xi_0,\eta_0,\xi_1,\eta_1 \in \bound \hyp^2$. Assume that the isometry group $\isom(X)$ acts transitively on triples of distinct points of $\bound X$. Then there must exists a sequence $(g_k)_{k \in \bbN}$, $g_k \in \isom(X)$ and a map $\varphi_\infty: \bound \hyp^2 \rightarrow \bound X$ such that $\lim_{k \to \infty} g_k\varphi_k(\xi)=\varphi_\infty(\xi)$ for every $\xi \in \bound \hyp^2$ and $\varphi_\infty$ is induced by an isometric embedding of $\hyp^2$ into $X$. 
\end{abstract}


\section{Introduction}

Given a $\cat (-1)$ space $X$ it is possible to define a real-valued function $\bir_X:(\bound X)^4 \rightarrow \bbR$ called cross ratio which takes quadruples of distinct points on the boundary at infinity of $X$. One of the possible ways of defining it relies on the existence on $\bound X$ of a family of visual metrics $\rho_x$, where $x \in X$, such that they all induce the same cross ratio (we say that they are Moebius equivalent). It is worth noticing that when $X$ is the universal cover of a surface, there are infinitely many inequivalent cross ratios on its boundary at infinity and they parametrize some classical geometric objects such as points of Teichmuller space (see~\cite{bonahon:articolo}) or Hitchin representations (see~\cite{labourie:articolo}). 

One of the main interest in the study of properties of the boundary at infinity $\bound X$ of a negatively curved space is the attempt to gain information about the geometry of $X$. A remarkable example of this philosophy is given by the rigidity result of~\cite{bourdon96:articolo} which states the following. Let $S$ be a rank-one symmetric space of non-compact type and let $X$ be a $\cat (-1)$ space.  Any Moebius embedding $\varphi: \bound S \rightarrow \bound X$, that means a topological embedding which satisfies $\bir_S( \xi_0,\eta_0,\xi_1,\eta_1)=\bir_X( \varphi(\xi_0),\varphi(\eta_0),\varphi(\xi_1),\varphi(\eta_1))$ for every $\xi_0, \eta_0,\xi_1,\eta_1 \in \bound S$, is induced by an isometric embedding $F:S \rightarrow X$.

The importance of Moebius embeddings relies on their deep relation with the marked length spectrum rigidity conjecture stated by Burns and Katok. In~\cite{burns:articolo} they conjectured that any isomorphism $\Phi: \pi_1(M) \rightarrow \pi_1(N)$ between the fundamental groups of two closed negatively curved manifolds $M,N$ which respects the associated length functions $\ell_M, \ell_N$, that is $\ell_M = \ell_N \circ \Phi$, must be induced by an isometry $F:M \rightarrow N$. One could interpret this conjecture as a variation of the Mostow rigidity theorem stated in~\cite{mostow:articolo} suitably adapted to manifolds with variable negative curvature. 

A proof of this conjecture in the case of surfaces was found independently by Otal in~\cite{otal90:articolo} and by Croke in~\cite{croke:articolo}, whereas the higher dimensional case is still open. Later both Otal and Hamenstadt found two remarkable equivalent formulations of the marked length spectrum rigidity. In~\cite{otal92:articolo} the former showed that the correspondence of marked length spectra for two negatively curved manifolds is equivalent to the existence of an equivariant Moebius map between the boundary at infinity of their universal covers. Similarly~\cite{ham:articolo} proved that the equivalence of marked length spectra implies a positive answer to the geodesic conjugacy problem which states that a homeomorphism $\Psi:T^1M \rightarrow T^1N$ between the unit tangent bundles of two negatively curved manifolds commuting with the geodesic flows is actually induced by an isometry. To sum up the marked length spectrum, the geodesic flow and the cross ratio on the boundary at infinity all encode the same geometric information about the negatively curved metric of a fixed manifold (see~\cite{biswas15:articolo} for a proof). 

For the reasons mentioned above many authors wanted to extend the result of Bourdon following several distinct directions. For instance both in~\cite{beyer:articolo} and in~\cite{fioravanti:articolo} the authors give a definition of cross ratio for flag boundaries of higher rank symmetric spaces and for the Roller boundary of $\cat (0)$ cube complexes. In addition they state an equivalent result to the one of Bourdon adapted to these specific contexts. Biswas studies in~\cite{biswas15:articolo,biswas:articolo} the extendability problem of Moebius maps between boundaries of proper, geodesically complete $\cat (-1)$ spaces. He shows that any Moebius homeomorphism can be extended to a $(1,\log 2)$-quasi isometry and this result can be refined for manifolds with pinched negative curvature.

In this paper we would like to focus our attention on a new perspective about Moebius maps. We set ourselves in the same hypothesis of Bourdon, that means let $S$ be a rank-one symmetric space of non-compact and let $X$ be a $\cat (-1)$ space. We say that a sequence of continuous maps $\varphi_k: \bound S \rightarrow \bound X$ is asymptotically Moebius if $\lim_{k \to \infty} \bir_X(\varphi_k(\xi_0), \varphi_k(\eta_0), \varphi_k(\xi_1), \varphi_k(\eta_1))=\bir_S(\xi_0,\eta_0,\xi_1,\eta_1)$ for every $\xi_0,\eta_1,\xi_1,\eta_1 \in \bound S$. It is pretty natural to ask under which hypothesis there should exists the limit of the sequence $(\varphi_k)_{k \in \bbN}$. Intuitively if this limit exists, it should be induced by an isometric embedding by the result of Bourdon. We point out that it may be necessary to precompose each map $\varphi_k$ with a suitable isometry $g_k \in \isom(X)$ in order to guarantee the existence of such a limit. 

We give an answer to the problem above when $S=\hyp^2$ is the hyperbolic plane and the isometry group $\isom(X)$ acts transitively on triples of distinct points in $\bound X$. More precisely we show the following 

\begin{teor}\label{convergence}
Let $X$ be a $\cat (-1)$ space and assume that $\isom (X)$ acts transitively on triples of distinct points on $\bound X$. Let $\varphi_k: \bound \hyp^2 \rightarrow \bound X$ be a sequence of continuous maps which is asymptotically Moebius. Then there must exists a sequence $(g_k)_{k \in \bbN}$ of isometries of $X$ and a map $\varphi_\infty: \bound \hyp^2 \rightarrow \bound X$ such that $\lim_{k \to \infty} g_k\varphi_k(\xi)=\varphi_\infty(\xi)$ for every $\xi \in \bound \hyp^2$. In particular $\varphi_\infty$ is induced by an isometric embedding. 
\end{teor}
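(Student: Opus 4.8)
The plan is to reduce the statement to Bourdon's rigidity theorem by first normalizing the maps with isometries, then producing a convergent subsequence of the normalized maps, identifying its limit as a genuine Moebius embedding, and finally upgrading subsequential convergence to convergence of the whole sequence. Throughout I would work with a fixed visual metric $\rho=\rho_o$ on $\bound X$ based at some $o\in X$ and the round visual metric $d$ on $\bound\hyp^2$, so that (up to the standard conventions)
$$\bir_X(a,b,c,d')=\log\frac{\rho(a,c)\,\rho(b,d')}{\rho(a,d')\,\rho(b,c)},$$
and analogously on $\bound\hyp^2$; recall $\rho\le 1$, so $\bound X$ has bounded diameter. Fix once and for all three distinct points $x_1,x_2,x_3\in\bound\hyp^2$ and three distinct points $y_1,y_2,y_3\in\bound X$. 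The asymptotically Moebius condition first forces eventual non-degeneration: if $\varphi_k(\xi)$ and $\varphi_k(\eta)$ accumulated to a common point for some $\xi\ne\eta$, then a cross ratio built from $\xi,\eta$ and two auxiliary points would tend to $0$, incompatible with the generically nonzero limit $\bir_{\hyp^2}$. In particular $\varphi_k(x_1),\varphi_k(x_2),\varphi_k(x_3)$ are distinct for $k$ large, so by transitivity of $\isom(X)$ on triples of distinct points I may choose $g_k\in\isom(X)$ with $g_k\varphi_k(x_i)=y_i$ (and $g_k$ arbitrary for the finitely many remaining $k$); since cross ratios are isometry invariant, the normalized sequence $\psi_k:=g_k\varphi_k$ is still asymptotically Moebius and now pins three points.

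The technical heart is to show that $(\psi_k)$ is equicontinuous with uniformly non-degenerate images. Evaluating the asymptotically Moebius condition on the quadruples $(x_1,x_2,x_3,\xi)$ shows that the ratios $\rho(\psi_k(\xi),y_i)/\rho(\psi_k(\xi),y_j)$ converge, for each $\xi\ne x_1,x_2,x_3$, to finite positive numbers fixed by the hyperbolic cross ratios; since no point of $\bound X$ can be simultaneously close to two distinct $y_i$, this bounds each $\rho(\psi_k(\xi),y_i)$ away from $0$ uniformly in $k$, for $\xi$ in compact subsets avoiding the $x_i$. Feeding this into the cross ratio $\bir_X(\psi_k(\xi),y_i,\psi_k(\eta),y_j)$ for two indices with $x_i,x_j$ far from the pair $(\xi,\eta)$ (a small ball in $\bound\hyp^2$ meets at most one of the $x_\ell$), whose hyperbolic counterpart tends to $-\infty$ as $d(\xi,\eta)\to 0$, forces $\rho(\psi_k(\xi),\psi_k(\eta))\to 0$ uniformly in $k$. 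This is precisely a uniform modulus of continuity. Assuming $\bound X$ is compact (which holds when $X$ is proper), Arzel\`a--Ascoli then yields a subsequence $\psi_{k_j}$ converging uniformly to a continuous map $\varphi_\infty$.

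It remains to identify and globalize the limit. First $\varphi_\infty$ is injective: if $\varphi_\infty(\xi)=\varphi_\infty(\eta)$ for $\xi\ne\eta$, then by continuity of $\rho$ the cross ratio $\bir_X(\psi_{k_j}(\xi),\psi_{k_j}(\eta),\psi_{k_j}(\zeta),\psi_{k_j}(\omega))$ would tend to $0$ for all $\zeta,\omega$, contradicting convergence to a generically nonzero hyperbolic cross ratio; hence $\varphi_\infty$ is a topological embedding. Since $\varphi_\infty$ is injective, for any four distinct boundary points the images are distinct, so passing to the limit in the asymptotically Moebius condition (using continuity of $\bir_X$ on quadruples of distinct points) gives $\bir_X(\varphi_\infty(\xi_0),\ldots)=\bir_{\hyp^2}(\xi_0,\ldots)$; thus $\varphi_\infty$ is a Moebius embedding, and Bourdon's theorem applies, so $\varphi_\infty$ is induced by an isometric embedding $F:\hyp^2\to X$. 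Finally, every subsequential limit of $(\psi_k)$ is such a Moebius embedding sending $x_i\mapsto y_i$; since an isometric copy of $\hyp^2$ is determined by the ideal triangle on $y_1,y_2,y_3$, whose sides are the unique geodesics between these ideal points in a $\cat(-1)$ space, all such limits coincide. Hence the whole sequence $(g_k\varphi_k)$ converges pointwise to $\varphi_\infty$, as required.

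I expect the main obstacle to be the equicontinuity-with-non-degeneration step: squeezing from the merely asymptotic cross ratio identity a uniform-in-$k$ upper bound on $\rho(\psi_k(\xi),\psi_k(\eta))$ requires choosing, for each pair $(\xi,\eta)$, reference points among $y_1,y_2,y_3$ keeping all auxiliary distances bounded away from $0$ and $\infty$. A secondary point to address is the compactness of $\bound X$ needed for Arzel\`a--Ascoli; if $X$ is not assumed proper one must instead check directly that the images $\psi_k(\bound\hyp^2)$ lie in a common compact subset, which again follows from the uniform distance bounds to the fixed triple $y_1,y_2,y_3$.
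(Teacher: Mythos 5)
Your strategy (normalize by $g_k$, extract a uniformly convergent subsequence via Arzel\`a--Ascoli, identify the limit as a Moebius embedding, then upgrade to convergence of the whole sequence by uniqueness of subsequential limits) is genuinely different from the paper's, which instead constructs the limit point $\varphi_\infty(\eta_1)$ explicitly for each single $\eta_1$ by an $\veps$-perturbation of Bourdon's comparison-angle argument (Lemma~\ref{existence}). Unfortunately your route has a decisive gap: the Arzel\`a--Ascoli step needs relative compactness of the sets $\{\psi_k(\xi)\}_{k}$ in $\bound X$, and nothing in the hypotheses provides it. The theorem does not assume $X$ proper; in fact the transitivity hypothesis is satisfied by the infinite-dimensional hyperbolic space $\hyp^\infty$, which is complete but not proper and whose boundary at infinity is not compact. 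Your proposed fallback --- that the images lie in a common compact set because they stay at visual distance bounded below from $y_1,y_2,y_3$ --- conflates boundedness with precompactness: in $\bound \hyp^\infty$ the set of points at visual distance at least $c$ from three fixed points contains infinite uniformly separated subsets, so it is bounded but not compact. A related, though less fatal, problem is the equicontinuity step itself: the asymptotically Moebius condition is pointwise in the quadruple, so for each pair $(\xi,\eta)$ the cross-ratio estimate holds only for $k\geq k_0(\xi,\eta)$, and $k_0$ may blow up as $\eta\to\xi$; a bound ``uniformly in $k$'' does not follow without an additional argument.

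The final uniqueness step is also unjustified as written. You claim that two Moebius embeddings sending $x_i\mapsto y_i$ must coincide because an isometric copy of $\hyp^2$ is ``determined by the ideal triangle on $y_1,y_2,y_3$''. As a statement about general $\cat(-1)$ spaces this is false: by Reshetnyak's gluing theorem one can glue two copies of $\hyp^2$ along a closed solid ideal triangle (a complete convex subset of each) and obtain a $\cat(-1)$ space containing two distinct isometrically embedded hyperbolic planes whose intersection is exactly that triangle; the two induced boundary maps are distinct Moebius embeddings agreeing precisely on the three vertices. Such a glued space does not satisfy the transitivity hypothesis, but your argument never invokes transitivity at this point, so something substantive is missing. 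Note also that even if uniqueness were repaired, passing from subsequential to full pointwise convergence again requires the relative compactness that is absent. The paper's pointwise construction --- choosing $x_k$ on the geodesic $(\xi_0',\xi_1')$, proving the comparison angles $\beta_k$ converge to the hyperbolic angle $\alpha$, and exhibiting the limit $\eta_1'$ as the endpoint of an explicit geodesic through the limit point $x$ --- is exactly what lets it avoid both compactness and any abstract uniqueness claim, which is why it applies to arbitrary $\cat(-1)$ targets.
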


The proof of this theorem is pretty easy and relies on the fact that the proof written by Bourdon can be $\veps$-perturbed remaining valid. In particular his approach allows us to construct by hand the limit map $\varphi_\infty$ we are looking for. Unfortunately the proof is specific for the hyperbolic plane and it cannot be directly extended to all the other symmetric spaces. 

The paper is structured as it follows. In the first section we recall the basic definitions we need, in particular the notion of visual metrics, the definition of cross ratio and its main properties. The second section is devoted to the proof of the main theorem and on some its consequences. We conclude with some comments about the possibility to extend the theorem to all rank-one symmetric spaces. 


\section{Preliminary definitions}

\subsection{Boundary of a $\cat (-1)$ space and cross ratio}
In this section we are going to recall the definitions of cross ratio and Moebius maps. For more details we refer mainly to~\cite{bourdon:tesi} and to~\cite{bridson:libro}.

Let be $(X,d_X)$ be a metric geodesic space. For every $x,y \in X$ we denote by $[x,y]$ the geodesic segment joining $x$ to $y$. It is possible to associate to every triangle $\Delta=[x,y] \cup [y,z] \cup [z,x]$ of $X$ a comparison triangle $\bar \Delta =[\bar x, \bar y] \cup [\bar y, \bar z] \cup [\bar z, \bar x]$ in the hyperbolic plane $\hyp^2$ whose sides have the same length of the sides of $\Delta$. This triangle is unique up to isometries. There exists a natural application $p: \Delta \rightarrow \bar \Delta$ whose restriction to each side of $\Delta$ is an isometry and that allows us to define the $\cat(-1)$ property. More precisely, we say that $\Delta$ satisfies the $\cat (-1)$ property if for every point $s,t \in \Delta$ we have that $$d_X(s,t) \leq d_{\hyp^2} (p(s),p(t)),$$ where $d_{\hyp^2}$ denotes the Riemannian distance on the hyperbolic plane. In the same way we say that $(X,d_X)$ is a $\cat (-1)$ space if each triangle $\Delta$ of $X$ satisfies the $\cat (-1)$ property. 

Every $\cat(-1)$ space $X$ admits a natural compactification. To construct the natural boundary at infinity we need first to define an equivalence relation between geodesic rays. Two geodesic rays $c_0,c_1:[0,\infty) \rightarrow X$ are \textit{asymptotic} if there exists a constant $C$ such that $d_X(c_0(t),c_1(t))<C$ for every $t>0$. The set $\bound X$ of boundary points of $X$, called \textit{boundary at infinity}, is the set of equivalence classes of asymptotic rays. If $c:[0,\infty) \rightarrow X$ is a geodesic ray, we sometimes denote its equivalence class by $c(\infty)$. 

Set $\bar X:= X \cup \bound X$. In order to endow $\bar X$ with a topology that realizes $\bar X$ as a compactification of $X$, we first fix an origin $x \in X$. Denote by $$\mathcal{R}_x=\{ c:I \rightarrow X| \textup{$c$ geodesic segment} \},$$ the set of all possible geodesic segments pointed at $x$, where the interval $I$ may be either equal to $I=[0,a], a \in \bbR$ or $I=[0,\infty)$. When $I=[0,a]$ for some $a \in \bbR$ we assume to extend the segment to the whole positive halfline by putting $c(t)=c(a)$ for every $t \geq a$. As before we put $c(\infty):=c(a)$. This choice allows us to define a natural map $$\textup{ev}_\infty: \mathcal{R}_x \rightarrow \bar X, \hspace{10pt} \textup{ev}_\infty(c):=c(\infty).$$ Since $\textup{ev}_\infty$ is surjective, if we endow $\mathcal{R}_x$ with the topology of uniform convergence on compact sets, we can give to $\bar X$ the quotient topology. With this topology, called \textit{cone topology}, $\bar X$ is compact and $X$ is an open dense subset. It is possible to show that the topology described so far does not depend on the choice of the origin $x \in X$.

Given any $x \in X$ and any $\xi \in \bound X$, there exists a geodesic ray pointed at $x$ representing $\xi$ . We are going to denote this ray by $[x,\xi)$. Similarly for two distinct points $\xi, \eta \in \bound X$ there exists a geodesic line joining them. We are going to denote this geodesic by $(\xi,\eta)$.

We are now ready to give the main definition of the section

\begin{deft}
Let $X$ be a $\cat (-1)$ space and let $\xi_0,\eta_0,\xi_1,\eta_1 \in \bound X$ be four distinct points on the boundary at infinty. Their \textit{cross ratio} is given by
\[
\bir_X(\xi_0,\eta_0,\xi_1,\eta_1)=\lim_{(x_0,y_0,x_1,y_1) \to (\xi_0,\eta_0,\xi_1,\eta_1)} e^{\frac{1}{2}(d_X(x_0,x_1)+d_X(y_0,y_1)-d_X(x_0,y_1)-d_X(x_1,y_0))},
\]
where $(x_0,y_0,x_1,y_1) \in X^4$ are points radially converging to the quadruple $(\xi_0,\eta_0,\xi_1,\eta_1)$. 

If $Y$ is another $\cat (-1)$ space, a topological embedding $\varphi: \bound X \rightarrow \bound Y$ between the boundaries at infinity is a \textit{Moebius map} if it satisfies 
\[
\bir_Y(\varphi(\xi_0),\varphi(\eta_0),\varphi(\xi_1),\varphi(\eta_1))=\bir_X(\xi_0,\eta_0,\xi_1,\eta_1)
\]
for every $\xi_0,\eta_0,\xi_1,\eta_1 \in \bound X$ distinct. Similary, a sequence of continuous maps $\varphi_k:\bound X \rightarrow \bound Y$ is \textit{asymptotically Moebius} if it holds
\[
\lim_{k \to \infty} \bir_Y(\varphi_k(\xi_0), \varphi_k(\eta_0), \varphi_k(\xi_1), \varphi_k(\eta_1))=\bir_X(\xi_0,\eta_0,\xi_1,\eta_1)
\]
for every $\xi_0,\eta_0,\xi_1,\eta_1 \in \bound X$ distinct. 
\end{deft}

\subsection{Visual metrics on the boundary at infinity}
The topology on $\bound X$ is metrizable and a family of natural distances can be constructed starting from the so-called Gromov product. Let $x,y,z$ be three points in $X$. The \textit{Gromov product} of $y$ and $z$ with respect to $x$ is given by $$(y|z)_x:=\frac{1}{2}(d_X(x,y)+d_X(x,z)-d_X(y,z)).$$ The product $(\cdot|\cdot)_x$ can be also extended to points at infinity as it follows. Let $\xi,\eta \in \bound X$ be two distinct points on the boundary of $X$ and let $p \in (\xi,\eta)$ be any point lying on the geodesic joining $\xi$ to $\eta$. Define $$(\xi|\eta)_x:=\frac{1}{2}(B_\xi(x,p)+B_\eta(x,p)),$$ where $B_\xi(x,p):=\lim_{t \to \infty} d_X(x,c(t))-d_X(p,c(t))$ is the Busemann function with respect to $\xi$ and $c$ is any geodesic ray representing the point $\xi$. It is easy to verify that if $y \in [x,\xi)$ and $z \in [x,\eta)$ then $$(\xi|\eta)_x= \lim_{(y,z) \to (\xi,\eta)} (y|z)_x.$$

This product at infinity allows us to define the \textit{visual metric pointed at} $x \in X$. If we fix $x \in X$ as an origin, we define $$\rho_x(\xi,\eta):=e^{-(\xi|\eta)_x}.$$ One can verify that the function $\rho_x$ define a metric on $\bound X$ which is bounded by $1$ and when the equality is attained the point $x$ must lie on the geodesic $(\xi,\eta)$. By changing the origin with a different point $y \in X$ we get another metric $\rho_y$ which is related to $\rho_x$ by the following formula
\[
\rho_y(\xi,\eta)=\rho_x(\xi,\eta)e^{\frac{1}{2}(B_\xi(x,y)+B_\eta(x,y))},
\]
for every $\xi,\eta \in \bound X$. The visual metric is intimately related with the cross ratio, indeed by~\cite[Proposition 1.1]{bourdon96:articolo} we have that 
\begin{equation}\label{visual}
\bir_X(\xi_0,\eta_0,\xi_1,\eta_1)=\frac{\rho_x(\xi_0,\xi_1)\rho_x(\eta_0,\eta_1)}{\rho_x(\xi_0,\eta_1)\rho_x(\eta_0,\xi_1)}
\end{equation}
for every $\xi_0,\eta_0,\xi_1,\eta_1 \in \bound X$ and for every $x \in X$. 

The visual metric $\rho_x(\cdot,\cdot)$ is also related with the comparison angle at infinity. Let $x,y,z$ be three distinct points in $X$ and let $\Delta:=[x,y] \cup [y,z] \cup [z,x]$ be the triangle associated to them. Let $\bar \Delta=[\bar x,\bar y] \cup [\bar y, \bar z] \cup [\bar z, \bar x]$ be a comparison triangle associated to $\Delta$ in $\hyp^2$. We define the \textit{comparison angle} at $x$ as
\[
\theta_x(y,z):=\angle_{\bar x}(\bar y,\bar z).
\]

As stated in~\cite[Proposition 1.2]{bourdon96:articolo} this definition can be extended to point of the boundary $\bound X$. More precisely let $\xi,\eta \in \bound X$ be two different points and let $y,z \in X$ radially converging to $\xi$ and $\eta$, respectively. Then $\theta_x(y,z)$ converges to a real number $\theta_x(\xi,\eta)$ called \textit{comparison angle at infinity}. Moreover it holds
\begin{equation} \label{angle}
\sin \frac{1}{2} \theta_x(\xi,\eta)=\rho_x(\xi,\eta),
\end{equation}
for every $\xi, \eta \in \bound X$ and every $x \in X$. 


\section{Proof of the main theorem}

In this section we are going to prove Theorem~\ref{convergence}. The proof will be essentially based on the strategy exposed by Bourdon in~\cite{bourdon96:articolo} for the hyperbolic plane. Here we are going to exploit the fact that the proof can be $\veps$-modified in order to hold also at infinity. 
Let $X$ be a $\cat (-1)$ space such that the isometry group $\isom(X)$ acts transitively on triples of distinct points of $\bound X$. Let $\varphi_k: \bound \hyp^2 \rightarrow \bound X$ be a sequence of continuous maps which is asymptotically Moebius. Since the isometry group acts transitively on triples, there exists a sequence of isometries $(g_k)_{k \in \bbN}$, $g_k \in \isom (X)$ such that
\begin{equation} \label{fixed0}
g_k\varphi_k(\xi_0)=\xi_0', \hspace{5pt} g_k\varphi_k(\eta_0)=\eta_0', \hspace{5pt} g_k\varphi_k(\xi_1)=\xi_1'
\end{equation}
for every $k \in \bbN$. Here $\xi_0',\eta_0',\xi_1' \in \bound X$ is a fixed triple of distinct points. If the triple $(\xi_0,\eta_0,\xi_1) \in (\bound \hyp^2)^3$ of distinct points satisfies Equation (\ref{fixed0}) we are going to say that it has fixed images. More generally, given a sequence of maps $\psi_k: \bound \hyp^2 \rightarrow \bound X$, we are going to say that a triple $(\xi_0,\eta_0,\xi_1) \in (\bound \hyp^2)^3$ of distinct points has \textit{fixed images} if it holds 
\begin{equation} \label{fixed}
\psi_k(\xi_0)=\xi_0', \hspace{5pt} \psi_k(\eta_0)=\eta_0', \hspace{5pt}\psi_k(\xi_1)=\xi_1'
\end{equation}
for every $k \in \bbN$ and for a fixed triple $\xi_0',\eta_0',\xi_1' \in \bound X$. 

\begin{lem}\label{existence} 
Let $\psi_k: \bound \hyp^2 \rightarrow \bound X$ be a sequence of asymptotically Moebius maps. Assume that $(\xi_0,\eta_0,\xi_1) \in \bound \hyp^2$ is a triple of distinct points with fixed images. Then for every other point $\eta_1 \in \bound \hyp^2$ the sequence $(\psi_k(\eta_1))_{k \in \bbN}$ admits a well-defined limit.  
\end{lem}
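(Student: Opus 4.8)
The plan is to reduce the statement to the convergence of two ratios of visual metrics, to extract an accumulation point by compactness, and then to prove that this accumulation point is unique; the last step is where the real work lies.

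First I would set $w_k:=\psi_k(\eta_1)$ and combine the fixed images \eqref{fixed} with the expression \eqref{visual} of the cross ratio in terms of the visual metric $\rho_x$. Since $\psi_k(\xi_0)=\xi_0'$, $\psi_k(\eta_0)=\eta_0'$ and $\psi_k(\xi_1)=\xi_1'$ for every $k$, one has
\[
\bir_X(\xi_0',\eta_0',\xi_1',w_k)=\frac{\rho_x(\xi_0',\xi_1')\,\rho_x(\eta_0',w_k)}{\rho_x(\xi_0',w_k)\,\rho_x(\eta_0',\xi_1')}.
\]
The asymptotically Moebius hypothesis forces the left-hand side to converge to the finite number $c:=\bir_{\hyp^2}(\xi_0,\eta_0,\xi_1,\eta_1)$, and as $\rho_x(\xi_0',\xi_1')$ and $\rho_x(\eta_0',\xi_1')$ are fixed positive constants, the ratio $\rho_x(\eta_0',w_k)/\rho_x(\xi_0',w_k)$ has a finite positive limit. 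Running the same computation on the quadruple $(\xi_0,\xi_1,\eta_0,\eta_1)$, whose image is $(\xi_0',\xi_1',\eta_0',w_k)$, yields in the same way a finite positive limit for $\rho_x(\xi_1',w_k)/\rho_x(\xi_0',w_k)$.

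Next, since $\bound X$ is compact in the cone topology, the sequence $(w_k)$ admits accumulation points; let $p$ be one of them, say $w_{k_j}\to p$. I would first check that $p\notin\{\xi_0',\eta_0',\xi_1'\}$: if, say, $p=\xi_0'$, then $\rho_x(\xi_0',w_{k_j})\to 0$ while $\rho_x(\eta_0',w_{k_j})\to\rho_x(\eta_0',\xi_0')>0$, so the first ratio would diverge, contradicting the previous step (and analogously for the other two vertices, using that $c$ is the cross ratio of four \emph{distinct} points and hence a genuine finite value). Thus $p$ is distinct from the triple, and continuity of $\rho_x$ shows that $p$ realises exactly the limiting ratios, equivalently that $\bir_X(\xi_0',\eta_0',\xi_1',p)=c$ together with all its permuted identities.

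It remains to prove that the fixed triple $\xi_0',\eta_0',\xi_1'$ together with these cross-ratio values determines $p$ uniquely; once this is known every accumulation point coincides with $p$ and the full sequence $(\psi_k(\eta_1))_k$ converges. Here I would translate, via \eqref{angle}, the two limiting ratios into relations among the comparison angles at infinity $\theta_x(\xi_0',p)$, $\theta_x(\eta_0',p)$, $\theta_x(\xi_1',p)$, after normalising the triple to a standard position through the transitivity of $\isom(X)$ on triples. Since the prescribed values are those of a genuine ideal quadrilateral of $\hyp^2$, I would invoke the equality case of the $\cat(-1)$ comparison, in the spirit of Bourdon's rigidity, to see that the four points $\xi_0',\eta_0',\xi_1',p$ span a configuration isometric to four ideal points of $\hyp^2$, whose first three already pin down the embedded ideal triangle; inside its boundary the prescribed angular data single out one point. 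The hard part is precisely this uniqueness: two accumulation points $p,q$ carry identical cross-ratio data with respect to the triple, and one must rule out the ``reflected'' solution coming from a symmetry fixing $\xi_0',\eta_0',\xi_1'$. The resolution I expect is that, the target data being realised by a flat copy of $\hyp^2$, the two loci cut out by the ratios become tangent and hence meet in a single point; equivalently the reflection across the embedded plane fixes the model fourth point and so yields no distinct solution. This is exactly where the one-dimensionality of $\bound\hyp^2$ is used, and it explains why the argument does not extend verbatim to the boundaries of the other rank-one symmetric spaces.
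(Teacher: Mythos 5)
Your first three steps are sound: the convergence of the two ratios $\rho_x(\eta_0',w_k)/\rho_x(\xi_0',w_k)$ and $\rho_x(\xi_1',w_k)/\rho_x(\xi_0',w_k)$, the extraction of accumulation points from compactness of $\bound X$, the exclusion of the triple, and the fact that any accumulation point $p$ realises the limiting cross-ratio data are all correct. But the whole proof then hangs on the uniqueness claim that you yourself flag as ``the hard part'', and that claim is not only unproven in your sketch --- it is \emph{false} in the generality in which you argue. Your argument never uses the standing hypothesis that $\isom(X)$ acts transitively on triples in an essential way (normalising the triple is cosmetic, since uniqueness is an isometry-invariant question), so if it worked it would prove the lemma for an arbitrary $\cat(-1)$ target. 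Here is a counterexample in that generality. Let $X$ be two copies of $\hyp^2$ glued along a geodesic $\gamma$ (this is $\cat(-1)$ by Reshetnyak's gluing theorem), let $\xi_0',\xi_1'$ be the endpoints of $\gamma$ and let $\eta_0'$ lie on the ideal boundary of the first page. The first page is a convex subset isometric to $\hyp^2$; but so is the ``folded'' plane obtained as the union of the half of page one whose ideal boundary contains $\eta_0'$ with the opposite half of page two. Both convex copies contain the ideal triangle over $(\xi_0',\eta_0',\xi_1')$, and convexity makes their boundary inclusions Moebius; hence each copy contains a fourth ideal point carrying exactly the prescribed planar cross-ratio data, and these two fourth points are \emph{distinct} points of $\bound X$. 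So the cross ratios of a single quadruple can never pin down the fourth point: your ``equality case of the $\cat(-1)$ comparison'' cannot exist as a general principle (Bourdon's rigidity needs a Moebius map defined on the whole circle, not two real numbers attached to four boundary points), and your reflection/tangency heuristic, which is correct for $X=\hyp^n$, has no analogue here --- indeed in this example uniqueness fails precisely because the ideal triangle lies in several embedded hyperbolic planes. Worse for your scheme: alternating the two Moebius embeddings $\bound\hyp^2\rightarrow\bound X$ associated to these two convex copies produces a sequence that is asymptotically Moebius, has fixed images on the triple, and has no pointwise limit, so no argument using only the limiting cross-ratio values can prove the lemma.

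The paper avoids this trap by never characterising the limit through its cross ratios. Following Bourdon, it works with the interior geometry of $X$: for each $k$ it chooses a point $x_k$ on the fixed geodesic $(\xi_0',\xi_1')$ at which the two comparison angles at infinity $\theta_{x_k}(\xi_0',\eta_0')$ and $\theta_{x_k}(\xi_1',\psi_k(\eta_1))$ coincide, and the $\veps$-perturbed cross-ratio estimates (together with $\rho_{x_k}(\xi_0',\xi_1')=1$ and the $\cat(-1)$ angle comparison $\theta_{x_k}(\xi_0',\psi_k(\eta_1))\geq \pi-\beta_k$, $\theta_{x_k}(\eta_0',\xi_1')\geq\pi-\beta_k$) squeeze this common angle $\beta_k$ to the angle $\alpha$ at which the diagonals $(\xi_0,\xi_1)$ and $(\eta_0,\eta_1)$ meet in $\hyp^2$. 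This forces $x_k\to x$ and then identifies the limit of $\psi_k(\eta_1)$ as the second endpoint of the geodesic through $\eta_0'$ and $x$. The key difference is that this uses the full strength of the convergence $\bir_X(\cdots\psi_k(\eta_1)\cdots)\to\bir_{\hyp^2}(\cdots)$ along the sequence, anchored to the crossing point of the diagonals (this is exactly where $S=\hyp^2$ enters), rather than only the limiting values; your compactness-plus-uniqueness decomposition discards precisely the information that makes the proof work.
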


\begin{proof}
Assume the triple $(\xi_0,\eta_0,\xi_1)$ is positively oriented with respect to the counter clockwise orientation of the circle $\bound \hyp^2$. The circle is divided into three distinct intervals by the triple. More precisely, given two points $\xi,\eta \in \bound \hyp^2$, we define the interval $$I(\xi,\eta):=\{ \alpha \in \bound \hyp^2| \textup{the triple $(\xi,\alpha,\eta)$ is positively oriented}\}.$$ We are going to prove that the limit of the sequence $(\psi_k(\eta_1))_{k \in \bbN}$ exists for every $\eta_1 \in I(\xi_1,\xi_0)$. The proof for the points of the other two intervals $I(\xi_0,\eta_0)$ and $I(\eta_0,\xi_1)$ can be suitably adapted and hence we will omit it. 

Since we have chosen $\eta_1 \in I(\xi_1,\xi_0)$, it is clear that the geodesics $(\xi_0,\xi_1)$ and $(\eta_0,\eta_1)$ are incident. Let $s \in \hyp^2$ be the intersection point of these two geodesics and define $\alpha=\angle_s(\xi_0,\eta_0)$ the angle determined by $\xi_0$ and $\eta_0$. 

For every $k \in \bbN$ we can define a point $x_k$ on the geodesic $(\xi_0',\xi_1')$ which satisfies 
\[
\beta_k:=\theta_{x_k}(\xi_0',\eta_0')=\theta_{x_k}(\xi_1',\varphi_k(\eta_1)),
\]
where we recall that $\theta_{x_k}(\cdot,\cdot)$ is the comparison angle at infinity. 

Since the maps $\psi_k$ are asymptotically Moebius, by definition we have
\[
\lim_{k \to \infty} \bir_X(\xi_0',\psi_k(\eta_1),\eta_0',\xi_1')=\bir_{\hyp^2}(\xi_0,\eta_1,\eta_0,\xi_1)
\]
for every $\xi_0,\eta_0,\xi_1,\eta_1 \in \hyp^2$. Hence for a fixed $\veps>0$, there exists a $k_0 \in \bbN$ such that 
\[
|\bir_{\hyp^2}(\xi_0,\eta_1,\eta_0,\xi_1) - \bir_X(\xi_0',\psi_k(\eta_1),\eta_0',\xi_1')| < \veps
\]
 for every $k> k_0$. 

Thanks to Equation (\ref{visual}) and Equation (\ref{angle}) we have that 
\begin{align*}
\sin^2 \frac{\alpha}{2} &=\bir_{\hyp^2}(\xi_0,\eta_1,\eta_0,\xi_1) \geq \bir_X(\xi_0',\psi_k(\eta_1),\eta_0',\xi_1') - \veps = \\
                                       &=\frac{\rho_{x_k}(\xi_0',\eta_0')\rho_{x_k}(\xi_1',\psi_k(\eta_1))}{\rho_{x_k}(\xi_0',\xi_1')\rho_{x_k}(\eta_0',\psi_k(\eta_1))} - \veps \geq \\
			       & \geq \rho_{x_k}(\xi_0',\eta_0')\rho_{x_k}(\xi_1',\psi_k(\eta_1)) - \veps= \sin^2 \frac{\beta_k}{2} - \veps,
\end{align*}
assuming $k>k_0$. In the same way we have that 
\begin{align*}
\cos^2 \frac{\alpha}{2} &=\bir_{\hyp^2}(\xi_0,\eta_0,\eta_1, \xi_1) \geq \bir_X(\xi_0',\eta_0',\psi_k(\eta_1), \xi_1') - \veps = \\
                                       &=\frac{\rho_{x_k}(\xi_0',\psi_k(\eta_1))\rho_{x_k}(\xi_1',\eta_0'))}{\rho_{x_k}(\xi_0',\xi_1')\rho_{x_k}(\eta_0',\psi_k(\eta_1))} - \veps \geq \\
			       & \geq \rho_{x_k}(\xi_0',\psi_k(\eta_1))\rho_{x_k}(\xi_1',\eta_0') - \veps=\sin \frac{\theta_{x_k}(\xi_0',\psi_k(\eta_1))}{2} \sin \frac{\theta_{x_k}(\eta_0',\xi_1')}{2} - \veps,
\end{align*}
again assuming $k>k_0$.

Since the point $x_k$ lies on the geodesic $(\xi_0',\xi_1')$ for every $k \in \bbN$ and $X$ is a $\cat (-1)$ space, it must hold
\[
\begin{cases}
\theta_{x_k}(\xi_0',\psi_k(\eta_1)) \geq \pi - \beta_k,\\
\theta_{x_k}(\eta_0',\xi_1') \geq \pi-\beta_k,
\end{cases}
\]
for every $k \in \bbN$. As a consequence we get
\begin{align*}
\cos^2 \frac{\alpha}{2} &\geq \sin \frac{\theta_{x_k}(\xi_0',\psi_k(\eta_1))}{2} \sin \frac{\theta_{x_k}(\eta_0',\xi_1')}{2} - \veps\\
			        &\geq \sin \frac{\pi - \beta_k}{2} \sin \frac{\pi-\beta_k}{2} - \veps=\cos^2 \frac{\beta_k}{2}-\veps.
\end{align*}

To sum up, for a fixed value $\veps >0$, there exists $k_0 \in \bbN$ such that 
\[
\sin^2 \frac{\alpha}{2} - \veps \leq \sin^2 \frac{\beta_k}{2} \leq \sin^2 \frac{\alpha}{2} + \veps,
\]
which means that $$\lim_{k \to \infty} \theta_{x_k}(\xi_0,\eta_0)=\lim_{k \to \infty} \beta_k=\alpha.$$

Since for every $k \in \bbN$ we have chosen a point $x_k$ on the geodesic $(\xi_0',\xi_1')$  and there is a unique point on $x \in (\xi_0',\xi_1')$ such that $\theta_x(\xi_0,\eta_0)=\alpha$, by continuity of the comparison angle at infinity we must have
\[
\lim_{k \to \infty} x_k=x. 
\]

In the same way, there is a unique point $\eta_1' \in \bound X$ on the geodesic determined by $\eta_0'$ and $x$ such that $\theta_x(\xi_1',\eta_1')=\alpha$, thus it follows
\[
\lim_{k \to \infty} \psi_k(\eta_1)=\eta_1', 
\]
as desired. 
\end{proof}

We are now ready to prove the main theorem.

\begin{proof}[Proof of Theorem~\ref{convergence}]
Let $\varphi_k: \bound \hyp^2 \rightarrow \bound X$ be a sequence of asymptotically Moebius maps. Consider a triple $(\xi_0,\eta_0,\xi_1) \in \bound \hyp^2$ of distinct points. As we noticed at the beginning of the section, by hypothesis we can find a sequence of isometries $(g_k)_{k \in \bbN}$, $g_k \in \isom(X)$ such that $(\xi_0,\eta_0,\xi_1)$ has fixed images. By Lemma~\ref{existence} for every $\eta_1' \in \bound \hyp^2$ there exists $\eta_1' \in \bound X$ such that
\[
\lim_{k \to \infty} g_k \varphi_k(\eta_1)=\eta_1'.
\]

Define $\varphi_\infty:\bound \hyp^2 \rightarrow \bound X$ in this way, that means $\varphi_\infty(\eta_1):=\eta_1'$. We get a function $\varphi_\infty$ which is the limit of the sequence $(g_k\varphi_k)_{k \in \bbN}$. By continuity of cross ratio on $4$-tuples of distinct points this function is Moebius and the theorem follows from~\cite{bourdon96:articolo}. 
\end{proof} 

\begin{oss}
The existence of the sequence $(g_k)_{k \in \bbN}$ is necessary to guarantee the existence of the limit map $\varphi_\infty: \bound \hyp^2 \rightarrow \bound X$. If we require only that the maps are asymptotically Moebius, their limit might exist or not. For instance, let $\varphi: \bound \hyp^2 \rightarrow \bound \hyp^3$ be a Moebius map and let $(g_k)_{k \in \bbN}$ be a divergent sequence of loxodromic isometries $g_k \in \textup{PO}(3,1)^\circ$. It is clear that the map $\varphi_k:=g_k \circ \varphi$ are asymptotically Moebius since each one is actually Moebius, but the sequence has no limit. 
\end{oss}

Roughly speaking a sequence of asymptotically Moebius maps admits a Moebius limit function up to precomposing suitably with isometries of $X$. Thanks to the equivalence between Moebius property of a boundary map and the existence of a geodesic conjugacy stated for instance in~\cite{biswas15:articolo}, we get the following

\begin{cor}\label{conjugacy}
Let $\varphi_k: \bound \hyp^2 \rightarrow \bound X$ be a sequence of asymptotically Moebius maps. Then there exists a sequence $(g_k)_{k \in \bbN}$ of isometries of $X$ and a map $\varphi_\infty:\bound \hyp^2 \rightarrow \bound X$ such that $$\lim_{k \to \infty} g_k\varphi_k(\xi)=\varphi_\infty(\xi),$$ for every $\xi \in \bound \hyp^2$. Moreover $\varphi_\infty$ conjugates the geodesic flows on the respective unit tangent bundles. 
\end{cor}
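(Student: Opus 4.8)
The plan is to separate the statement into its two assertions and reduce each to something already at hand. The convergence claim---the existence of the isometries $(g_k)_{k\in\bbN}$ and of a pointwise limit $\varphi_\infty$ with $\lim_{k\to\infty}g_k\varphi_k(\xi)=\varphi_\infty(\xi)$---is verbatim the content of Theorem~\ref{convergence}, so I would simply invoke it. The very same theorem also records the fact I shall exploit in the second half: the limit $\varphi_\infty$ is induced by an isometric embedding $F:\hyp^2\to X$, and in particular, since cross ratios are computed from distances, $\varphi_\infty$ preserves $\bir$ and is a Moebius map.

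It then remains to produce a conjugacy of geodesic flows whose action on endpoints is $\varphi_\infty$. Geometrically I would argue through $F$ directly. A unit tangent vector on $\hyp^2$ amounts to an oriented geodesic line together with a marked position along it, that is an ordered pair of distinct boundary points $(\xi_-,\xi_+)$ and a point of the geodesic $(\xi_-,\xi_+)$; the geodesic flow fixes the endpoints and slides the marked position. Because $F$ is an isometric embedding into a uniquely geodesic space, it carries the line $(\xi_-,\xi_+)$ isometrically---hence respecting arc length---onto the geodesic of $X$ with endpoints $\varphi_\infty(\xi_-),\varphi_\infty(\xi_+)$. The induced map $\Psi$ on the unit tangent bundles therefore commutes with time translation, $\Psi\circ g^t_{\hyp^2}=g^t_X\circ\Psi$, and its boundary action is precisely $\varphi_\infty$; this is exactly what it means for $\varphi_\infty$ to conjugate the two flows.

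The delicate point, and the only real obstacle, is the matching of the time parameter. A boundary map preserving the cross ratio controls not merely which geodesic is sent to which, but also how positions along geodesics correspond, through the visual metrics via Equation~(\ref{visual}); it is this that forces $\Psi$ to intertwine the flows with no time reparametrization, rather than to be a mere topological conjugacy. Instead of carrying out this bookkeeping by hand, I would invoke the equivalence, established in~\cite{biswas15:articolo}, between the Moebius property of a boundary map and the existence of a geodesic conjugacy: once $\varphi_\infty$ is known to be Moebius, that equivalence yields the conjugacy directly. A minor caution is that, for a general $\cat(-1)$ space $X$ which need not be a manifold, the geodesic flow and the unit tangent bundle should be read on the space of parametrized geodesic lines, which is precisely the setting in which the cited equivalence is proved.
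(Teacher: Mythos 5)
Your proposal is correct and takes essentially the same route as the paper: the paper likewise obtains $(g_k)_{k\in\bbN}$ and the Moebius limit $\varphi_\infty$ directly from Theorem~\ref{convergence} and then cites the equivalence in~\cite{biswas15:articolo} between the Moebius property of a boundary map and the existence of a geodesic conjugacy. Your additional geometric sketch via the isometric embedding $F$ (including the caveat about reading the flow on the space of parametrized geodesics when $X$ is not a manifold) is just a faithful elaboration of that same citation, not a different argument.
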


It should be clear by what we have written so far that the procedure we used in the proof is specific for the hyperbolic plane. Even if we strongly believe that a statement similar to Theorem~\ref{convergence} should hold in the more general context of asymptotic Moebius maps of rank-one symmetric space of non-compact type into $\cat (-1)$ spaces, we would need something different from a simple generalization of the argument used in~\cite{bourdon96:articolo}. There the author shows that the geodesic conjugacy between the unit tangent bundle obtained by the Moebius map must be actually induced by a map between fiber bundles (that means it respect the points of tangent spaces) and to do this he exploits the validity of the result for the hyperbolic plane. Unfortunately in our context we cannot apply this reasoning because we can guarantee the convergence of only one isometrically embedded copy of $\hyp^2$, hence we cannot apply the transitive property used in the proof of Bourdon.


\addcontentsline{toc}{chapter}{\bibname} 	

\vspace{40pt}
Alessio Savini, Department of Mathematics, University of Bologna, Piazza di Porta San Donato 5, 40126 Bologna, Italy\\
\textit{E-mail address}: \texttt{alessio.savini5@unibo.it}
\end{document}